\newtheorem{theorem}{Theorem}[section]
\newtheorem{cor}[theorem]{Corollary}
\newtheorem{lem}[theorem]{Lemma}
  \newtheorem{defn}[theorem]{Definition}
\newtheorem{exam}[theorem]{Example}
\newtheorem{remark}[theorem]{Remark}
\numberwithin{equation}{section}
\newcommand{\BC}{{\Bbb C}}
\newcommand{\BN}{{\Bbb  N}}
\newcommand{\BR}{{\Bbb  R}}
\newcommand{\BK}{{\Bbb K}}
\newcommand{\Ch}{{\rm Ch}}
\newcommand{\E}{{\mathcal{E}}}
\begin{document}


\title[ A Note on nonlinear Isometries Between Vector-Valued Function spaces] {A Note on nonlinear Isometries Between
 Vector-Valued Function spaces}

\author{Arya Jamshidi and Fereshteh Sady }

\subjclass[2010]{Primary 46J10, 46J20; Secondary 47B33}

\keywords{isometries, vector-valued continuous functions, strong
boundary points, Choquet boundary } \maketitle





\maketitle \vspace*{0.5cm} \noindent


\begin{abstract}
Let $X,Y$ be compact Hausdorff spaces and $E,F$ be Banach spaces
over $\BR$ or $\BC$.  In this paper, we investigate the general
form of surjective (not necessarily linear) isometries $T:A
\longrightarrow B$ between subspaces $A$ and $B$ of  $C(X,E)$ and
$C(Y,F)$, respectively. In the case that $F$ is strictly convex,
it is shown that there exist a subset $Y_0$ of $Y$, a continuous
function  $\Phi: Y_0 \longrightarrow X$ onto the set of strong
boundary points of $A$ and a family $\{V_y\}_{y\in Y_0}$ of
real-linear operators from $E$ to $F$  with $\|V_y\| =1$ such that
\[ Tf(y)-T0(y)=V_y(f(\Phi(y))) \,\,\,\,\,\,\,(f \in A, y \in Y_0).\]
In particular, we get some generalizations of the vector-valued
Banach--Stone theorem  and a generalization of Cambern's result.
 We also give a similar result in the case that $F$ is not
 strictly convex, but its unit sphere contains a maximal convex subset which is singleton.
 \end{abstract}



\section{Introduction }
The study of  isometries between subspaces of continuous functions
originally dates back to the classical Banach--Stone theorem. The
theorem  has various  generalizations (in scalar valued case)
based on different techniques, see for example  \cite{Flem1, jam,
miura1, miura2, tonev, lee}.

For a compact Hausdorff space $X$ and a Banach space $E$, let
$C(X,E)$ be the Banach space of continuous $E$-valued functions on
$X$ endowed with supremum norm $\|\cdot\|_\infty$. A
representation theorem for isometries between $C(X,E)$-spaces was
given in \cite{jerr} by Jerison as follows:

Let $X$ and $Y$ be compact Hausddorff spaces,  $E$ be a strictly
convex Banach space and let $T:C(X,E) \longrightarrow C(Y,E)$ be a
surjective linear isometry. Then there exist a continuous surjection
$\Phi: Y \longrightarrow X$ and a map $t \longrightarrow V_t$ which
is continuous from $Y$ into the space $B(E)$ of all bounded
operators on $E$, endowed with the strong operator topology, such
that $Tf(t) = V_t(f(\Phi(t))$  for all $f\in C(X,E)$ and $t \in
Y$.

This  result has been  generalized by Cambern \cite{cam} for into
linear isometries  and by Font \cite{font1} for certain
vector-valued subspaces of continuous functions. In \cite{Al}
Al-Halees and Fleming relaxed the strict convexity condition on
$E$, and by considering $T$-sets in companion with another
condition on $E$, called condition (P), they obtained more general
results. We also refer the  reader to the nice books
\cite{Flem1,Flem2} including many earlier results.

More recently, surjective isometries between certain subspaces of
vector-valued continuous functions  have been studied in
\cite{bote} and \cite{kawa1}. We should note that the method used
in  these  papers  is based on extreme point technique. By
\cite{bote} (see also \cite{Bot-E}), for a compact Hausdorff space
$X$ and a reflexive real Banach space $F$  whose dual is strictly
convex, if $A$ is a subspace of $C(X,F)$ which separates $X$ in
the sense of \cite[Definition 3.1]{bote}, and $T:A \longrightarrow
A$ is a surjective isometry preserving constant functions, then
there exist a surjective isometry $V:F\longrightarrow F$ and a
homeomorphism $\tau : X \longrightarrow X$ such that
\[ Tf(x)=V(f(\tau(x)))\;\;\;\;\;\;\; (f\in A, x\in X) \]
This  result has been generalized in several directions in
\cite{kawa1}.  We refer one of them which is related to our
results. First we state conditions (S3) and (M) introduced in
\cite{kawa1} for a subspace $A$ of $C(X,E)$ where $X$ is a compact
Hausdorff space and $E$ is a Banach space over $\Bbb C$ or $\Bbb
R$:

(S3) For each $x$ in the Choquet boundary $\Ch(A)$ of $A$, for
each neighborhood $U$ of $x$ and for each $u \in E$ there exists a
function  $f \in A$ such that $\|f  \|_\infty = \|u\|$, $f (x) =
u$ and $f =0$ on $X\backslash U$.

\vspace*{.25cm}

(M)  for each $f\in A$ with $f(x)=0$ and for each $\epsilon>0$, there exist a neighborhood $U$ of $x$ and $f_\epsilon\in A$ such that
$\|f-f_\epsilon\|_\infty<\epsilon$ and  $f_\epsilon=0$ on $U$.

\vspace*{.25cm}

\begin{theorem}\cite[Theorem 3.4]{kawa1} \label{kawa}
Let $X$ and $Y$ be compact Hausdorff spaces and let $E$ be a
strictly convex reflexive real or complex Banach space. Assume
that $A$ and $B$ are subspaces of $C(X,E)$, respectively,
containing constant functions and both satisfy  conditions {\rm
(S3)} and {\rm (M)}. Let $T: A \longrightarrow B$ be a surjective
linear isometry. Then there exist a continuous surjection
$\varphi: \Ch(B) \longrightarrow \Ch(A)$ between the Choquet boundaries, and a family $V_y:
E\longrightarrow E$, $y\in \Ch(B)$, of linear operators with
$\|V_y\|=1$ such that
\[ Tf(y)=V_y(f(\varphi(y))) \;\;\;\;\;\;(f\in A, y\in \Ch(B)).\]
If, furthermore, the dual space $E^*$ is strictly convex, then $\varphi$ is a homeomorphism and $V_y$ is an isometric
isomorphism for each $y\in \Ch(B)$.
\end{theorem}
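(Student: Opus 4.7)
The plan is to run the extreme-point / adjoint-isometry program of Jerison and Cambern, adapted to the subspace setting: (S3) supplies vector-valued peaking functions at each point of $\Ch(A)$, while (M) supplies the localization needed to rule out ``spread-out'' extreme functionals.

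First, I would establish the extreme-point description of the dual unit ball: the extreme points of $(A^*)_1$ are exactly the functionals
\[ \psi_{x, e^*, \alpha}: f \longmapsto \alpha\, e^*(f(x)), \qquad x \in \Ch(A),\ e^* \in \ext((E^*)_1),\ |\alpha| = 1, \]
and similarly for $(B^*)_1$. That these are extreme is an Arens--Kelley style argument using (S3) to produce peaking functions with prescribed $E$-values together with strict convexity of $E$; the converse inclusion uses (M) to prevent an extreme functional from being supported at more than one point of $X$.

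Since $T$ is a surjective linear isometry, $T^*: B^* \to A^*$ is a surjective linear isometry, so it bijects $\ext((B^*)_1)$ onto $\ext((A^*)_1)$. Applied to $\psi_{y, e^*, 1}$ for $y \in \Ch(B)$ and $e^* \in \ext((E^*)_1)$, this produces $x(y, e^*) \in \Ch(A)$, $\tilde e^*(y, e^*) \in \ext((E^*)_1)$ and unimodular $\alpha(y, e^*)$ with
\[ e^*(Tf(y)) = \alpha(y, e^*)\, \tilde e^*(y, e^*)\bigl(f(x(y, e^*))\bigr) \qquad (f \in A). \]
The main obstacle is to show that $x(y, e^*)$ does not depend on $e^*$; call the common value $\varphi(y)$. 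I would argue by contradiction: if distinct $x_1, x_2$ arose from $e^*_1, e^*_2$, condition (S3) at $x_1$ on a neighborhood excluding $x_2$ yields $f \in A$ with $f(x_1)$ a prescribed nonzero vector and $f(x_2) = 0$; substituting into both identities and using that $\ext((E^*)_1)$ is norming for $E$ (by reflexivity plus Krein--Milman on the weak-$*$ compact $(E^*)_1$) should drive the contradiction.

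With $\varphi$ well defined, set $V_y(u) := Tf(y)$ for any $f \in A$ with $f(\varphi(y)) = u$ (exists by (S3)): well-definedness comes from (M) combined with the uniqueness established above, linearity is inherited from $T$, and choosing $f$ with $\|f\|_\infty = \|u\|$ gives $\|V_y\| \leq 1$, with the matching lower bound from the $T^{-1}$ analogue. Continuity of $\varphi$ is a net / cluster-point argument invoking the same uniqueness, and surjectivity of $\varphi$ follows by running the entire construction on $T^{-1}: B \to A$. For the strengthened conclusion under strict convexity of $E^*$, one has $\ext((E^*)_1) = S_{E^*}$, so the map $e^* \mapsto \tilde e^*(y, e^*)$ is defined on the full dual sphere; composing with its analogue for $T^{-1}$ shows $V_y$ is surjective and hence an isometric isomorphism. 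Injectivity of $\varphi$ then follows from (S3) applied in $B$: if $\varphi(y_1) = \varphi(y_2) = x$, choose $g \in B$ peaking at $y_1$ and vanishing at $y_2$; then $f := T^{-1}g$ gives $V_{y_2}(f(x)) = 0$, so $f(x) = 0$ by invertibility of $V_{y_2}$, whence $V_{y_1}(f(x)) = 0$ contradicts $V_{y_1}(f(x)) = g(y_1) \ne 0$. A continuous bijection between compact Hausdorff spaces being a homeomorphism finishes the argument.
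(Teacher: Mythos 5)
This statement is the quoted Theorem \ref{kawa} from Kawamura's paper \cite{kawa1}; the present paper does not prove it, but only records it as background, and its own results (Theorem \ref{main} and the first corollary) recover a closely related statement by a genuinely different method, namely maximal convex subsets of the unit spheres of $A$ and $B$ rather than extreme points of the dual balls. Your outline is therefore not comparable to anything proved here; it is essentially the extreme-point/adjoint program of the original source (and of Jerison, Cambern, Botelho--Jamison), which is a legitimate route to this particular theorem. As a plan it has the right skeleton, but three steps as written would not go through.

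First, the independence of $x(y,e^*)$ on $e^*$ is the crux, and your contradiction does not close: from $f(x_2)=0$ and $f(x_1)=u$ you only get $e_2^*(Tf(y))=0$ while $e_1^*(Tf(y))\neq 0$, i.e.\ one functional vanishes on $Tf(y)$ and another does not --- no contradiction, and the normingness of $\ext((E^*)_1)$ cannot help because you control only these two functionals. The step that actually works (and is where strict convexity of $E$ is consumed) is: use (S3) to produce $f,g$ with disjoint supports peaking at $x_1,x_2$ with $\tilde e_1^*(f(x_1))=1=\tilde e_2^*(g(x_2))$ (norm attainment via reflexivity), so that $a=Tf(y)$ and $b=Tg(y)$ are unit vectors with $\|a\pm b\|\le\|f\pm g\|_\infty=1$; then $2=\|(a+b)+(a-b)\|\le\|a+b\|+\|a-b\|\le 2$ forces equality in the triangle inequality for two unit vectors, and strict convexity of $E$ gives $a+b=a-b$, i.e.\ $b=0$, contradicting $\|b\|=1$. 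Second, you apply $T^*$ to $\psi_{y,e^*,1}$ for \emph{every} extreme $e^*$ and every $y\in\Ch(B)$, but the definition of the Choquet boundary used here only guarantees that \emph{some} extreme $\nu^*$ makes $\nu^*\circ\delta_y$ extreme in the dual ball; the ``for all extreme $e^*$'' version is itself a lemma requiring (S3) and strict convexity, and you should flag it as such rather than fold it into the definition.

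Third, the closing line ``a continuous bijection between compact Hausdorff spaces is a homeomorphism'' is not available: $\Ch(A)$ and $\Ch(B)$ are in general not closed, hence not compact. Continuity of $\varphi^{-1}$ should instead be obtained by identifying $\varphi^{-1}$ with the continuous map $\psi$ constructed from $T^{-1}$, which requires first proving $\psi\circ\varphi=\mathrm{id}$ and $\varphi\circ\psi=\mathrm{id}$ by a peaking argument; note also that your derivation of surjectivity of $V_y$ by composing the two representations already presupposes $\varphi(\psi(x))=x$, so the order of these steps must be rearranged (this is exactly how the paper's final theorem organizes the analogous argument).
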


The purpose of this paper is to  study surjective, not necessarily
linear,  isometries  $T : A \longrightarrow B$  between  subspaces
$A$ and $B$ of $C(X, E)$ and $C(Y, F)$, respectively, where $X, Y$
are compact Hausdorff spaces and $E, F$  are Banach spaces (over
$\BR$ or $\BC$). We first assume that $F$ is strictly convex and
give a description of $T$
 on appropriate subset $Y_0$ of $Y$. The given description deals with the set of strong boundary points of $A$, which is, in
 many nice cases, large enough to be a boundary.  Then, by imposing some additional assumptions on $A$ and
 $B$,  we give a similar result for certain  non strictly convex Banach
 spaces.  We should note that our method  is based on studying maximal
convex subsets of the unit spheres of $A$ and $B$.  This method
initially emerged in the works of Eilenberg \cite{eil} and Myers
\cite{may} and has later been adapted to the scalar valued case by
Roberts and Lee \cite{lee}.
\section{Preliminaries}
Throughout this paper $\BK$ stands for the scalar fields $\BR$ or
$\BC$. For a compact Hausdorff space $X$ and a Banach space $E$
over $\BK$, $C(X,E)$ is the Banach space of all continuous
$E$-valued functions on $X$ endowed with the supremum norm
$\|\cdot\|_\infty$.  For each $u \in E$, the constant map  $c_u: X
\longrightarrow E$ is defined by $c_u(x)=u$ for each $x \in X$. We
say that a subspace $A$ of $C(X,E)$ is {\em $E$-separating}  if
for any  distinct points $x,x'\in X$ and arbitrary $u \in E$ there
exists $f\in A$ with $f(x)=0 $, $f(x')=u$ and
$\|f\|_\infty=\|u\|$. For every $f \in A$ we put $M(f)=\{x \in X :
\|f(x)\| = \|f\|_\infty\}$.

For a normed space $\E$ we denote the unit sphere of $\E$ by
$S(\E)$ and we put \[ \widetilde{S}(\E)=\{ K: K {\rm \; is \; a\;
maximal\; convex\; subset\;  of\;} S(\E)\}.\] Clearly $S(\E)$ is
not convex and each convex subset of $S(\E)$ is contained in an
element of $\widetilde{S}(\E)$. We note that if  $\E$ is strictly
convex, then all maximal convex  subsets of $S(\E)$ are singleton.

Let $X$ be a compact Hausdorff space and $E$ be a Banach space
over $\Bbb K$. For a subspace $A$ of $C(X,E)$  the {\em Choquet
boundary} of $A$ is denoted by $\Ch(A)$. We recall that $\Ch(A)$ consists of all points  $x \in X$
such that $\nu^* \circ \delta_x $ is an extreme point of the closed unit
ball of $A^{*}$ for some extreme point $\nu^*$ of the closed unit ball of
$E^*$. It is well known that $\Ch(A)$ is  a boundary for $A$ in
the sense that for each $f\in A$ there exists $x\in \Ch(A)$ such
that $\|f(x)\|=\|f\|_\infty$.  For $x \in X$
and $K \subseteq S(E)$ we set
\[V^{A}_{x}=\{ f \in
S(A) : \|f(x)\| =1 \},\,\,\,\, V^{A}_{x, K}=\{ f \in S(A) : f(x)
\in K\}.\]

\begin{lem}\label{conv}
 Let $X$ be a compact Hausdorff space, $E$ be a Banach space  over $\Bbb K$, and $A$ be a $\BK$-subspace of $C(X,E)$.
 Then for each  convex subset $C$ of $S(A)$ there exist
 $x \in X$ and   $K \in \widetilde{S}(E)$ such  that $C \subseteq V^{A}_{x, K}$.
 In particular every maximal convex subset of $S(A)$ is of the form $V^{A}_{x, K}$
 for some $x \in X$ and  $K \in \widetilde{S}(E)$.
\end{lem}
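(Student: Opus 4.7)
The plan is to locate a single point $x \in X$ at which every function in $C$ attains its norm, and then to observe that the evaluation of $C$ at $x$ is a convex subset of $S(E)$, which can then be enlarged to an element of $\widetilde{S}(E)$.

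First, I would argue that the family of closed sets $\{M(f)\}_{f\in C}$ in $X$ has the finite intersection property. Given $f_1,\dots,f_n \in C$, convexity of $C$ implies $g := \tfrac1n(f_1+\cdots+f_n)\in C\subseteq S(A)$, so there exists $x_0 \in X$ with $\|g(x_0)\|=1$. Since $\|f_i(x_0)\|\le 1$ for each $i$ and
\[ 1 = \|g(x_0)\| \le \tfrac1n\sum_{i=1}^n \|f_i(x_0)\| \le 1, \]
each $\|f_i(x_0)\|$ must equal $1$, so $x_0\in\bigcap_i M(f_i)$. Compactness of $X$ now gives a point $x\in\bigcap_{f\in C} M(f)$.

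Next, the set $C(x):=\{f(x):f\in C\}$ is convex (as the image of the convex set $C$ under the linear evaluation map at $x$) and lies in $S(E)$ by the choice of $x$. A routine application of Zorn's lemma to the family of convex subsets of $S(E)$ that contain $C(x)$ yields a maximal convex subset $K\in\widetilde{S}(E)$ with $C(x)\subseteq K$. By construction, every $f\in C$ satisfies $f\in S(A)$ and $f(x)\in K$, so $C\subseteq V^A_{x,K}$.

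For the ``in particular'' statement, suppose $C$ is maximal convex in $S(A)$. It suffices to check that $V^A_{x,K}$ is itself a convex subset of $S(A)$, for then maximality of $C$ forces equality. Given $f,g\in V^A_{x,K}$ and $\lambda\in[0,1]$, convexity of $K$ yields $\lambda f(x)+(1-\lambda)g(x)\in K\subseteq S(E)$, hence $\|\lambda f+(1-\lambda)g\|_\infty \ge 1$; the reverse inequality is immediate from $\|f\|_\infty=\|g\|_\infty=1$, so $\lambda f+(1-\lambda)g\in S(A)$ with value in $K$ at $x$, confirming convexity.

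The only delicate point is the finite-intersection step: one must use the fact that convex combinations stay in $C$ to pass from ``each $f$ attains its norm somewhere'' to ``they attain their norms at a common point'', and then leverage compactness of $X$. Everything else is either a linear-algebra observation or a standard Zorn's-lemma extension, so I do not anticipate any further genuine obstacle.
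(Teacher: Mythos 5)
Your proof is correct and follows the same route as the paper: find a common norm-attaining point via the finite intersection property of $\{M(f)\}_{f\in C}$, then enlarge the convex set $\{f(x):f\in C\}\subseteq S(E)$ to a maximal convex subset $K$. You simply supply the details (the averaging argument for the FIP, the Zorn's lemma extension, and the convexity of $V^A_{x,K}$ needed for the ``in particular'' clause) that the paper leaves as routine.
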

\begin{proof}
Since $C$ is convex, it follows easily that the family $\{M(f): f\in C\}$ of compact subsets of $X$ has finite intersection property and consequently
$\cap_{f\in C} M(f)\neq \emptyset$. Let $x\in \cap_{f\in C} M(f)$ and put $K_0=\{ f(x): f\in C\}$. Then $K_0$ is a convex subset of $S(E)$ and so there exists
$K\in \widetilde{S}(E)$ such that $K_0\subseteq K$. This clearly implies that $C\subseteq V^A_{x,K}$, as desired.
\end{proof}

Let $X$ be a compact Hausdorff space and $A$ be a  $\BK$-subspace
of $C(X,E)$. We call a  point $x\in X$  a {\em strong boundary
point} of $A$ if for each neighborhood $U$ of $x$, $\epsilon>0$,
and  $u\in S(E)$ there exists a function $f \in A$ such that
$\|f\|_\infty=1$, $f(x)=u$ and $\|f(y)\| < \epsilon $ for all
$y\in X\backslash U$. We denote the set of all strong boundary
points of $A$   by $\Theta(A)$. We also denote the set of points
$x\in X$ satisfying the above condition for $\epsilon=1$ by
$\tau(A)$. Hence $\Theta(A)\subseteq \tau(A)$.

\section{Main results}
We begin this section by introducing certain type of points satisfying some maximal convexity conditions.
\begin{defn} {\rm  Let $X$ be a compact Haudsorff space, $E$ be a Banach space over $\Bbb K$, and $A$ be
a $\Bbb K$-subspace of $C(X,E)$. We say that a point $x\in X$ is
of {\em type one }  for $A$ if for each $K \in \widetilde{S}(E)$,
$V^{A}_{x, K}$ is a maximal convex subset of $S(A)$. A point $x\in
X$ is of {\em type two} for $A$ if for each  $K, K' \in
\widetilde{S}(E)$ and $y \in X$ the inclusion  $V^{A}_{x, K}
\subseteq V^{A}_{y, K'}$ implies $x=y$. }
\end{defn}

It is easy to see that  if $A$ is $E$-separating or it contains
constants, then for any point $x\in X$ of type two, the inclusion
$V^{A}_{x, K} \subseteq V^{A}_{y, K'}$, where $K, K' \in
\widetilde{S}(E)$ and $y \in X$,  implies $x=y$ and  $K=K'$.

The set of all type one, respectively  type two points  for $A$
will be denoted by $\eta_1(A)$ and $\eta_2(A)$.

We note that for an arbitrary subspace $A$ of $C(X,E)$  some of
the above defined  sets may be empty. However, as the next lemma
shows, $\eta_1(A)$ and $\eta_2(A)$ contain the set of strong
boundary points of $A$, which is large enough for certain
subspaces $A$.

\begin{lem}\label{1}
Let $X$ be a compact Hausdorff space, $E$ be a Banach space over
$\Bbb K$ and $A$ be a $\Bbb K$-subspace  of $C(X,E)$.  Then
\[\Theta(A) \subseteq \tau(A) \subseteq \eta_2(A)\cap X_0 \subseteq \eta_1(A),\] where
$X_0=\{ x\in X: S(E) \subseteq \{f(x): f\in S(A)\}\}$.
In particular, if $A$ contains constants, then $\eta_2(A)
\subseteq \eta_1(A)$. If $A$ is assumed to be $E$-separating, then
$\eta_{2}(A)=X$.
\end{lem}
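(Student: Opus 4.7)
The plan is to verify the four inclusions of the chain in turn and then derive the two corollary statements.

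First, $\Theta(A)\subseteq\tau(A)$ is immediate from the definitions, since $\Theta(A)$ imposes the required approximation for every $\epsilon>0$, in particular for $\epsilon=1$. Next, $\tau(A)\subseteq X_0$: for $x\in\tau(A)$, applying the defining condition with $U=X$ produces, for each $u\in S(E)$, some $f\in S(A)$ with $f(x)=u$, so $S(E)\subseteq\{f(x):f\in S(A)\}$ and $x\in X_0$.

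The first substantive step is $\tau(A)\subseteq\eta_2(A)$. Given $x\in\tau(A)$ and $V^{A}_{x,K}\subseteq V^{A}_{y,K'}$ with $K,K'\in\widetilde{S}(E)$, I would argue by contradiction: if $y\ne x$, choose a neighborhood $U$ of $x$ with $y\notin U$ and any $u\in K$, and apply the $\tau$-property (with $\epsilon=1$) to obtain $f\in S(A)$ with $f(x)=u$ and $\|f(z)\|<1$ for $z\in X\setminus U$. Then $f\in V^{A}_{x,K}$, but $\|f(y)\|<1$ while every element of $K'\subseteq S(E)$ has norm one, contradicting $f\in V^{A}_{y,K'}$.

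The key step is $\eta_2(A)\cap X_0\subseteq\eta_1(A)$. Fix $x$ in the intersection and $K\in\widetilde{S}(E)$. Since $x\in X_0$, every $u\in K$ arises as $f(x)$ for some $f\in S(A)$; hence $V^{A}_{x,K}$ is a nonempty convex subset of $S(A)$ and is contained in some maximal convex subset $C$. By Lemma~\ref{conv}, $C=V^{A}_{y,K'}$ for some $y\in X$ and $K'\in\widetilde{S}(E)$. The hypothesis $x\in\eta_2(A)$ forces $y=x$, and then for each $u\in K$ the corresponding $f$ lies in $V^{A}_{x,K}\subseteq V^{A}_{x,K'}$, giving $u\in K'$. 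Thus $K\subseteq K'$, and maximality of $K$ in $\widetilde{S}(E)$ forces $K=K'$. So $V^{A}_{x,K}=C$ is already maximal and $x\in\eta_1(A)$. I expect this to be the main obstacle, as it is the one place where both maximality principles (in $S(A)$ and in $S(E)$) must be combined with the $X_0$-hypothesis.

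The two ``in particular'' statements are then quick. If $A$ contains constants, then every $c_u$ with $u\in S(E)$ witnesses $x\in X_0$ for each $x\in X$, so $X_0=X$ and the preceding inclusion collapses to $\eta_2(A)\subseteq\eta_1(A)$. If $A$ is $E$-separating, then given any $x\in X$ and $V^{A}_{x,K}\subseteq V^{A}_{y,K'}$ with $y\ne x$, picking $u\in K$ and applying the separating property yields $f\in S(A)$ with $f(x)=u$ and $f(y)=0$, contradicting $f(y)\in K'\subseteq S(E)$; hence $\eta_2(A)=X$.
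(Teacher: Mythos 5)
Your proposal is correct and follows essentially the same route as the paper: the same contradiction argument via the $\tau(A)$-property for $\tau(A)\subseteq\eta_2(A)$, the same use of Lemma~\ref{conv} combined with maximality of $K$ in $\widetilde{S}(E)$ and the $X_0$-hypothesis for $\eta_2(A)\cap X_0\subseteq\eta_1(A)$, and the expected easy verifications for the remaining claims (which the paper leaves to the reader). Your explicit remarks on the nonemptiness of $V^{A}_{x,K}$ and on how $K\subseteq K'$ is extracted are welcome elaborations of steps the paper only sketches.
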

\begin{proof}
 The first inclusion is trivial. Take $x \in \tau(A)$ and assume that  $V^{A}_{x, K} \subseteq
V^{A}_{y, K'}$, where $y\in X\backslash \{x\}$
 and $K, K' \in \widetilde{S}(E)$. Fixing $u \in K$ we can find an open neighborhood
$U$ of $x$ and $f \in A$ such that  $y \in X\backslash U$,
$\|f\|_\infty=1$, $f(x)= u$ and $\|f(z)\|<1$ for all $z\in
X\backslash U$. Since $f\in V_{x,K}$ it follows that $f(y)\in K'$
and, in particular, $\|f(y)\|=1$, a contradiction.  This shows
that $x\in \eta_{2}(A)$, that is $\tau(A) \subseteq \eta_2(A)$. Clearly $\tau(A) \subseteq X_0$.

 Now suppose that $x \in \eta_{2}(A)\cap X_0$ and let  $K \in \widetilde{S}(E)$.
 Since  $V^{A}_{x, K}$ is a convex subset of $S(A)$, it is
 contained in a maximal convex subset of $S(A)$. Hence, by Lemma \ref{conv}, there exist
 $y\in X$ and  $K' \in \widetilde{S}(E)$ such that  $V_{y,K}$ is a maximal convex subset of $S(A)$ and $V^{A}_{x, K}
\subseteq V^{A}_{y, K'}$. Therefore, $y=x$ since $x\in \eta_2(A)$.
Hence $V^A_{x,K}\subseteq V^A_{x,K'}$ and, being $x\in X_0$, it
follows easily that $K\subseteq K'$. Thus $K=K'$, that is
$V^A_{x,K}$ is a maximal convex subset of $S(A)$. This concludes
that  $\eta_2(A)\cap X_0\subseteq \eta_1(A)$.

The second part can be easily verified.
\end{proof}

Using a similar argument as in \cite[Lemma 3.2]{jam} we get the
next lemma. We should note that the lemma  is similar to the
additive Bishop's Lemma in scalar case, see for instance
\cite{tonev}.

\begin{lem}\label{Bis}
Let $X$ be a compact Hausdorff space, $E$ be a Banach space and  $A$ be a closed
$\BK$-subspace of $C(X,E)$. Assume that  $x \in \Theta(A)$ and  $f
\in A$ such that $\|f\|_\infty=1$ and $f(x)=0$. Then for each $u \in S(E)$ and $0 <r
<1$ there exists $g \in V^{A}_{x, \{u\}}$ such that $rf+ g \in
V^{A}_{x, \{u\}}$.
\end{lem}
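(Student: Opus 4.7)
My aim is to construct $g\in A$ with $g(x)=u$, $\|g\|_\infty=1$, and $\|rf+g\|_\infty=1$. Since $(rf+g)(x)=g(x)=u$ has norm one, both sup norms are automatically at least $1$, so the real content is to arrange $\|g\|_\infty\le 1$ and $\|rf+g\|_\infty\le 1$ simultaneously.

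The first natural candidate is a single strong peak function. For any $\epsilon>0$, the continuity of $f$ at $x$ together with $f(x)=0$ yields an open neighborhood $U$ of $x$ on which $\|f\|<\epsilon$; by $x\in\Theta(A)$ there is then $h\in A$ with $h(x)=u$, $\|h\|_\infty=1$, and $\|h(y)\|<\epsilon$ on $X\setminus U$. Taking $g=h$, the required identities hold at $x$, and on $X\setminus U$ one has $\|(rf+g)(y)\|\le r+\epsilon\le 1$ as soon as $\epsilon\le 1-r$. The difficulty lies on $U$, where the triangle inequality alone yields only the weaker estimate $\|(rf+g)(y)\|\le 1+r\epsilon$, which does not close the argument.

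To overcome this I would adapt the additive Bishop-type strategy of \cite[Lemma 3.2]{jam}: take a nested family $U_n\downarrow\{x\}$ of open neighborhoods on which $\|f\|<1/n$, and, by repeated invocation of $x\in\Theta(A)$, select peak functions $h_n\in A$ with $h_n(x)=u$, $\|h_n\|_\infty=1$, and $\|h_n\|<1/n$ off $U_n$. A carefully weighted combination of the $h_n$, for instance an infinite convex combination $g=\sum_n c_n h_n$ with $c_n\ge 0$ and $\sum c_n=1$, lies in $A$ by closedness and linearity and automatically satisfies $g(x)=u$ and $\|g\|_\infty\le 1$. The weights are to be chosen so that on each shell $U_n\setminus U_{n+1}$ the tail mass $\sum_{k>n}c_k$ offsets the $r/n$ contribution coming from $rf$, giving $\|g+rf\|(y)\le 1$ there.

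I expect the main technical hurdle to be precisely this balancing of weights against the decay rates of $f$ and of the peak functions, so that the bound $\|g+rf\|_\infty\le 1$ holds simultaneously on every shell; closedness of $A$ in $C(X,E)$ is essential to guarantee the limiting function still lies in $A$, while the linear structure of $A$ is used to form the combinations.
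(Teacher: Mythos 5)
Your proposal is correct and is essentially the paper's own argument: the paper offers no proof beyond citing \cite[Lemma 3.2]{jam}, whose proof is exactly this telescoping sum of peak functions over nested neighborhoods on which $f$ is small, summed with suitable convex weights inside the closed subspace $A$. The weight-balancing you leave open does close with geometric choices, e.g.\ $U_n=\{y:\|f(y)\|<2^{-n}\}$ and $g=\sum_{n\ge1}2^{-n}h_n$ with $h_n(x)=u$, $\|h_n\|_\infty=1$ and $\|h_n\|<\tfrac{1-r}{2}$ off $U_n$, which on the shell $U_n\setminus U_{n+1}$ yields $\|(rf+g)(y)\|\le (1-2^{-n})+\tfrac{1-r}{2}\,2^{-n}+r\,2^{-n}=1-\tfrac{1-r}{2}\,2^{-n}<1$, with the remaining regions $X\setminus U_1$ and $\bigcap_n U_n$ handled trivially.
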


We note that, by the above lemma, for each $x\in \Theta(A)$ and $u\in S(E)$
we have \[\ker(\delta_x) \subseteq s (V_{x,\{u\}})-V_{x,\{u\}})
\] where $s>0$ and $\delta_x: A \longrightarrow E$ is defined by
$\delta_x(f)=f(x)$, $f\in A$. Motivated by this, we say that a
point $x\in X$ is a {\em Bishop point} for $A$ if for each $u\in S(E)$
the above inclusion holds for some $s>0$. We denote the set of
such points for $A$ by $\Omega(A)$. Hence $\Theta(A) \subseteq \Omega(A)$.

In what follows  we assume that $X, Y$ are compact Hausdorff
spaces, $E,F$ are Banach spaces over $\Bbb K$ and $A,B$ are $\Bbb
K$-subspaces of $C(X,E)$ and $C(Y,E)$, respectively. Let $T:A
\longrightarrow B$ be a surjective, not necessarily linear,
isometry. Since, by the Mazur-Ulam theorem, $T-T0$ is real-linear,
without loss of generality we assume that $T0=0$ and  $A$ is
closed in $C(X,E)$. Clearly $T$ maps each maximal convex subset of
$S(A)$ to a such subset of $S(B)$. Hence, by Lemma \ref{conv},
for each $x \in \eta_{1}(A)$ and   $K \in \widetilde{S}(E)$ there
exist $y \in Y$ and $L \in \widetilde{S}(F)$ such that
$T(V^{A}_{x, K})=V^{B}_{y, L}$. In the case that $F$ is strictly
convex, $L$ is a singleton and so there exists  $v\in S(F)$ such
that $T(V^A_{x, \{u\}})\subseteq V^{B}_{y, \{v\}}$ for each $u\in
K$. Motivated by this, for each $x \in \eta_1(A)$ we set
\begin{align*}
H_x= \{ y\in Y:  T(V^A_{x, \{u\}}) \subseteq V^B_{y, \{v\}}\; {\rm
for\; some\;} u\in S(E) \; {\rm and}\; v\in S(F)\}.
\end{align*}
We also put $Y_0= \bigcup_{x\in \Theta(A)} H_x$ and
$Y_1=\bigcup_{x\in \eta_2(A)\cap \Omega(A)\cap X_0} H_x$, where
$X_0$ is as in Lemma \ref{1}. We recall that $X_0=X$ if $A$
contains constants and $\eta_2(A)=X=X_0$ if $A$ is $E$-separating.

We note that if $\Theta(A)=X$ and $\widetilde{S}(F)$ contains a
singleton $\{v\}$ (in particular, if $F$ is strictly convex), then
$Y_0\supseteq \Theta(B)$. Indeed,  for each $y\in \Theta(B)$,
since $T^{-1}$ is also an isometry,  there exist $x\in X$ and
$K\in \widetilde{S}(E)$ such that $T^{-1}(V^B_{y,
\{v\}})=V^A_{x,K}$. This implies that for each $u\in K$ we have
$T(V^A_{x, \{u\}}) \subseteq V^B_{y, \{v\}}$, that is $y\in H_x$.
Hence, in this case $Y_0\supseteq \Theta(B)$.
\begin{lem}\label{dis}
Let $x,x'\in X$ be distinct. In either of cases that
$x,x'\in \Theta(A)$ or $A$ is $E$-separating and $x,x' \in  \Omega(A)$ we have  $H_x\cap H_{x'}=\emptyset$.
\end{lem}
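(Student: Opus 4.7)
The plan is to argue by contradiction. Suppose $y \in H_x \cap H_{x'}$; then by definition there exist $u, u' \in S(E)$ and $v, v' \in S(F)$ with
\[ T(V^A_{x,\{u\}}) \subseteq V^B_{y,\{v\}}, \qquad T(V^A_{x',\{u'\}}) \subseteq V^B_{y,\{v'\}}. \]
Since $T$ is real-linear by Mazur--Ulam, the central object is the real-linear evaluation $f\mapsto Tf(y)$. My strategy proceeds in two stages: first show that this map annihilates $\ker\delta_x \cap A$ (and symmetrically $\ker\delta_{x'}\cap A$); then produce, or approximate, a function $g$ that simultaneously lies in $\ker\delta_x$ and in $V^A_{x',\{u'\}}$, so that the first stage forces $Tg(y)=0$ while the second inclusion forces $Tg(y)=v'$, contradicting $\|v'\|=1$.

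For the first stage the idea is to decompose any $f \in A$ with $f(x)=0$ as a real-linear combination of elements of $V^A_{x,\{u\}}$ whose $T$-images all take the value $v$ at $y$. If $x \in \Theta(A)$, I normalise $f$ so that $\|f\|_\infty=1$ and apply the additive Bishop lemma (Lemma \ref{Bis}) with the vector $u$ supplied by $y \in H_x$: for any $0<r<1$ this produces $g, rf+g \in V^A_{x,\{u\}}$, so evaluating both of $Tg(y)$ and $T(rf+g)(y)$ gives $v$, and real-linearity yields $rTf(y)=0$. If instead $x\in \Omega(A)$, the definition of a Bishop point directly writes $f = s(h_1-h_2)$ with $h_1,h_2 \in V^A_{x,\{u\}}$, and the same cancellation gives $Tf(y)=0$. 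The argument at $x'$ is identical.

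The contradiction is then immediate in the $E$-separating case: $E$-separation produces $g \in A$ with $g(x)=0$, $g(x')=u'$, $\|g\|_\infty=1$, so $g \in V^A_{x',\{u'\}}$ forces $Tg(y)=v'$, while $g(x)=0$ and the first stage force $Tg(y)=0$. When only $x,x'\in \Theta(A)$ is assumed, no such exact separator need exist; this is the main obstacle. I would overcome it by approximation: fix $\epsilon>0$ and disjoint neighborhoods of $x$ and $x'$, use strong boundary at $x'$ to choose $f \in V^A_{x',\{u'\}}$ with $\|f(x)\|<\epsilon$, and then use strong boundary at $x$ applied to the unit direction $f(x)/\|f(x)\|$, scaled by $\|f(x)\|$, to build $h \in A$ with $h(x)=f(x)$ and $\|h\|_\infty=\|f(x)\|<\epsilon$. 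Then $f-h \in \ker\delta_x$, so Step 1 gives $Tf(y)=Th(y)$, and $\|Th(y)\|\leq \|h\|_\infty<\epsilon$ contradicts $\|Tf(y)\|=\|v'\|=1$ as soon as $\epsilon<1$.

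The most delicate point is this peaking-and-cancellation step in the $\Theta(A)$ case: it replaces the missing exact separator by a pair of peaking functions manufactured from the strong-boundary hypothesis, and it relies crucially on the fact that the strong boundary yields a peaking function whose sup-norm equals the norm of its prescribed value at $x$, so that $Tf(y)$ inherits the smallness of $\|h\|_\infty$.
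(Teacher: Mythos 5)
Your proposal is correct and follows essentially the same route as the paper's proof: first the annihilation of $\ker\delta_x$ via Lemma \ref{Bis} (or the Bishop-point decomposition), then the exact separator in the $E$-separating case, and in the $\Theta(A)$ case the same peak-and-match construction (the paper peaks at $x$ and matches the small value at $x'$ with an $h$ of norm $\|f(x')\|<\tfrac12$, which is your argument with the roles of $x$ and $x'$ interchanged and $\epsilon=\tfrac12$). The only point worth noting is the trivial edge case $f(x)=0$, where the contradiction is immediate without constructing $h$.
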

\begin{proof}
We first show that for each  $x \in \eta_2(A)\cap \Omega(A)\cap X_0$ and   $y \in H_x$
if $f\in A$ such that $f(x)=0$, then $Tf(y)=0$. We note that,  by
the definition of $H_x$,  there are  $u \in S(E)$,  $v \in
S(F)$  such that $T(V^{A}_{x, \{u\}}) \subseteq
V^{B}_{y, \{v\}}$. Fixing $r\in (0,1)$, by Lemma \ref{Bis},
there exists $g \in V^{A}_{x, \{u\}}$ such that $rf+ g \in
V^{A}_{x, \{u\}}$. Therefore  $T(rf+g)(y)= v$ and $Tg(y)= v$ which implies, by the real-linearity of $T$,  $Tf(y)=0$.

Now assume that  $x,x'\in \Theta(A)$ are distinct and assume on the
contrary that there exists a point $y$ in $H_x\cap H_{x'}$. Since
$\Theta(A)\subseteq \eta_2(A)\cap \Omega(A)\cap X_0$, it follows from the  above
argument that $Tf(y)=0$ for each $f\in A$ satisfying
either $f(x)=0$ or $f(x')=0$. Let  $u\in  S(E)$ and
$v\in S(F)$ be as  above. Since $x\in \Theta(A)$
there exists $f\in A$ with $\|f\|_\infty=1$, $f(x)=u$ and
$\|f(x')\|<\frac{1}{2}$. Similarly, since $x'\in \Theta(A)$ we can choose
$h\in A$ satisfying $h(x')=f(x')$ and $\|h\|_\infty=\|f(x')\|$. Then  the function $g=f-h$ is an element of $A$ with $g(x')=0$. Hence  $T(g)(y)=0$ and consequently  $v=T(f)(y)=T(h)(y)$. Thus
$1=\|v\|=\|T(f)(y)\|=\|T(h)(y)\|\le
\|T(h)\|_\infty=\|h\|_\infty=\|f(x')\|<\frac{1}{2}$, a contradiction.

Consider the case that $A$ is $E$-separating and $x,x'\in  \Omega(A)$. Let $y\in
H_x\cap H_{x'}$ and $u$ and $v$ be as above. Then, by assumption,  there exists $f
\in A$ such that $\|f\|_\infty=1$, $f(x)=u$ and $f(x')=0$. As before, we get $Tf(y)=0$ while $f\in
V_{x, \{u\}}$ and $T(V_{x,\{u\}}\subseteq V_{y,\{v\}}$, a contradiction.
This shows that in both cases we have $H_x\cap H_{x'}=\emptyset$.
\end{proof}

Using the  above lemma
we can  define a map $\Phi: Y_0 \longrightarrow \Theta(A)$ such
that for each $y\in Y_0$, $\Phi(y)$ is the unique point $x\in
\Theta(A)$ with $y\in H_x$. Clearly $\Phi$ is a well-defined map
which is surjective whenever $F$ is strictly convex. Similarly we
can define a function $\Phi_1: Y_1 \longrightarrow  \Omega(A)$, whenever $A$ is $E$-separating.

\begin{theorem} \label{main}
Let $X,Y$ be compact Hausdorff spaces, $E,F$ be Banach spaces over
$\Bbb K$, where $F$ is strictly convex. Let $A,B$ be
$\BK$-subspaces of $C(X,E)$ and $C(Y,F)$, respectively, and  $T: A
\longrightarrow B$ be a surjective (not necessarily linear)
isometry. Then  there exists a subset $Y_0$ of $Y$, a continuous
surjection $\Phi: Y_0 \longrightarrow \Theta(\overline{A})$  and a
family $\{V_y\}_{y\in Y_0}$  of real-linear operators from $E$ to
$F$ with $\|V_y\| = 1$ such that
\[ Tf(y)=T0(y)+ V_y(f(\Phi(y))) \;\;\;\;\; (f \in A, y \in Y_0).\]
Furthermore,

(i) if $A$ contains constants, then the map $Y_0\longrightarrow B(E,F)$ is continuous with respect to the strong operator topology on $B(E,F)$;

(ii) if $A,B$ contain constants and $T$ maps each constant
function to a  constant function, then all $V_y$ are equal to a real-linear isometry $V:E\longrightarrow F$.
\end{theorem}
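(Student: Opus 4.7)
The plan is to assemble the representation by running the machinery already developed in the paper. As in the passage preceding the theorem, by the Mazur--Ulam theorem we may subtract $T0$ and assume $T$ is real-linear, and we may assume $A$ is closed, so that $\Theta(\overline{A}) = \Theta(A)$. Since $T$ is a real-linear bijective isometry, it carries maximal convex subsets of $S(A)$ onto those of $S(B)$; combining Lemma \ref{conv} with strict convexity of $F$ (which forces every element of $\widetilde{S}(F)$ to be a singleton) and the inclusion $\Theta(A) \subseteq \eta_1(A)$ of Lemma \ref{1}, for every $x \in \Theta(A)$ and each $K \in \widetilde{S}(E)$ there exist $y \in Y$ and $v \in S(F)$ with $T(V^A_{x,K}) = V^B_{y,\{v\}}$, so that any $u \in K$ produces an inclusion $T(V^A_{x,\{u\}}) \subseteq V^B_{y,\{v\}}$. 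Thus $H_x \ne \emptyset$, the set $Y_0 = \bigcup_{x \in \Theta(A)} H_x$ is nonempty, and Lemma \ref{dis} lets me define $\Phi: Y_0 \to \Theta(A)$ by sending $y \in H_x$ to $x$; surjectivity is automatic from the construction of $y$ just given.

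With $\Phi$ in hand I would construct the family $V_y$. Fix $y \in Y_0$ and $x = \Phi(y)$. The proof of Lemma \ref{dis} already shows $Tg(y) = 0$ whenever $g \in A$ satisfies $g(x) = 0$. Because $x$ is a strong boundary point, for every nonzero $u \in E$ the strong boundary condition applied to $u/\|u\|$ yields (after scaling) an $f \in A$ with $f(x) = u$ and $\|f\|_\infty = \|u\|$; set $V_y(u) := Tf(y)$ and $V_y(0) := 0$. Well-definedness is exactly the vanishing statement just quoted combined with real-linearity of $T$, and real-linearity of $V_y$ then follows by taking $\alpha f_1 + \beta f_2$ as a representative for $\alpha u_1 + \beta u_2$. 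The bound $\|V_y(u)\| \le \|f\|_\infty = \|u\|$ gives $\|V_y\| \le 1$, and selecting $u \in S(E)$ and $f \in V^A_{x,\{u\}}$ witnessing $y \in H_x$ forces $V_y(u) = v \in S(F)$, so $\|V_y\| = 1$. The identity $Tf(y) = V_y(f(\Phi(y)))$ for arbitrary $f \in A$ is then immediate from the definition.

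The step I expect to be the main hurdle is continuity of $\Phi$. I would argue by contradiction: if $\Phi$ is discontinuous at some $y \in Y_0$, then using compactness of $X$ one can produce a net $y_\alpha \to y$ with $\Phi(y_\alpha) \to x' \ne x = \Phi(y)$. Pick a neighborhood $U$ of $x$ whose closure omits $x'$, and let $u \in S(E)$ witness $y \in H_x$, so $\|V_y(u)\| = 1$. Applying the strong boundary condition at $x$ with this $u$ and some $\epsilon \in (0,1)$ produces $f \in A$ with $\|f\|_\infty = 1$, $f(x) = u$, and $\|f(z)\| < \epsilon$ for $z \notin U$. Since eventually $\Phi(y_\alpha) \notin U$, one obtains $\|Tf(y_\alpha)\| = \|V_{y_\alpha}(f(\Phi(y_\alpha)))\| \le \|f(\Phi(y_\alpha))\| < \epsilon$, while continuity of $Tf$ yields $Tf(y_\alpha) \to Tf(y) = V_y(u)$ of norm $1$, the contradiction.

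For (i), if $A$ contains constants then for each $u \in E$ the function $c_u \in A$ satisfies $c_u(\Phi(y)) = u$, so $V_y(u) = T(c_u)(y)$; continuity of $T(c_u) \in C(Y,F)$ then gives continuity of $y \mapsto V_y(u)$ for every $u$, which is precisely strong operator continuity of $y \mapsto V_y$. For (ii), the hypothesis that $T$ sends constants to constants forces $T(c_u) = c_{V(u)}$ for some vector $V(u) \in F$, so $V_y(u) = T(c_u)(y) = V(u)$ is independent of $y$. Since $T$ is a real-linear isometry with $T0 = 0$, the induced map $V : E \to F$ is real-linear and satisfies $\|V(u)\| = \|T(c_u)\|_\infty = \|c_u\|_\infty = \|u\|$, so $V$ is the required real-linear isometry.
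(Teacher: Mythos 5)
Your proposal is correct and follows essentially the same route as the paper: the same construction of $Y_0$, $\Phi$, and $V_y$ from Lemmas \ref{conv}, \ref{1}, \ref{Bis} and \ref{dis}, the same norm computations, and the same key device for continuity of $\Phi$ (a function peaking at $\Phi(y)$ and small off a neighborhood, combined with $\|Tf(y')\|\le\|f(\Phi(y'))\|$), merely phrased as a net-based contradiction rather than the paper's direct exhibition of the neighborhood $W=\{y'\in Y_0:\|Tf(y')\|>\tfrac12\}$. Parts (i) and (ii) also match the paper's argument.
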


\begin{proof} As we noted before, we can assume that $T$ is real-linear and $A$ is closed in $C(X,E)$.
Let $Y_0\subseteq Y$ and $\Phi: Y_0 \longrightarrow \Theta(A)$ be
defined as above.  For each $y \in Y_0$, let $V_y:E
\longrightarrow F$ be defined by $V_y(u)=T(f_0)(y)$, where $f_0\in
A$ satisfies $f_0(\Phi(y))=u$.  We note that there exists a
function $f_0\in A$ satisfying this property, since $\Phi(y) \in
\Theta(A)$.
 Note also that $V_y$ is well defined. Indeed, for $u\in E$
 if  $f_0,f_1\in A$ such that $f_0(\Phi(y))=u=f_1(\Phi(y))$,
 then  $(f_0-f_1)(\Phi(y))=0$ and it follows from real-linearity of $T$ and the argument given in Lemma \ref{dis} that
 $T(f_0)(y)=T(f_1)(y)$. It is easy to see
that  $V_y$ is a real-linear operator and since $f_0\in A$
with $f_0(\Phi(y))=u$  can be chosen such that
$\|f_0\|_\infty=\|u\|$ we have $\|V_y\|\le 1$. Clearly
$Tf(y)=V_y(f(\Phi(y)))$ holds for all $f\in A$ and $y\in Y_0$.

The strict convexity  of $F$ shows that for each $x\in \Theta(A)$,
$H_x$ is nonempty. Hence  $\Phi$ is surjective. To show that
$\Phi$ is continuous, let $y_0\in Y_0$ and $U$ be a neighborhood
of $\Phi(y_0)$ in $\Theta(A)$. Choose an open neighborhood
$\widetilde{U}$ in $X$ with $U=\widetilde{U} \cap \Theta(A)$. By
the definition of $\Phi$, there exist $u\in S(E)$ and $v\in S(F)$
such that $T(V^{A}_{\Phi(y_0), \{u\}}) \subseteq V^{B}_{y_0,
\{v\}}$. Since $\Phi(y_0) \in \Theta(A)$, we can find $f\in
V^{A}_{\Phi(y_0), \{u\}}$  such that $\|f(z)\|\le \frac{1}{2}$ on
$X\backslash \widetilde{U}$. Then $W=\{y\in Y_0:
\|Tf(y)\|>\frac{1}{2}\}$ is a neighborhood of $y_0$ in $Y_0$ and
for each $y\in W$, $\|f(\Phi(y))\| \geq \|V_{y}( f(\Phi(y))\|
=\|Tf(y)\|>\frac{1}{2}$, that is $\Phi(W)\subseteq
\widetilde{U}\cap \Theta(A)$ and so $\Phi$ is continuous.

We now show that for each $y\in Y_0$, $\|V_y\|=1$. Let $y_0 \in
Y_0$ and choose $u\in S(E)$ and $v\in S(F)$ as above. Let $f_0\in
A$ such that $f_0(\Phi(y_0))=u$ and $\|f_0\|_\infty =1$. Then
$V_{y_0}(u)=T(f_0)(y_0)$ and since  $f_0 \in V^{A}_{\Phi(y_0),
\{u\}}$ we have  $T(f_0)(y_0)= v$. Hence $\|V_{y_0}(u)\| =
\|T(f_0)(y_0)\| = 1= \|u\|$ and consequently $\|V_{y_0}\|=1$.

To prove (i) assume that $A$ contains constants. Then for each
$y\in Y_0$ and $u\in E$ we have $V_{y}(u)=T(c_u)(y)$. Hence for
each net $\{y_\alpha\}$ in $Y_0$ converging to a point $y\in Y_0$,
it follows from continuity of $T(c_u)$ that $V_{y_\alpha}(u)\to
V_{y}(u)$, as desired.

Finally to prove (ii) assume that  $A, B$ contain constants and
$T$ maps constants to constants. For each $u\in S(E)$ we have
$\|V_y(u)\|=\|T(c_u)(y)\|=\|v\|=1$ where $v\in S(F)$ such that
$c_v=T(c_u)$. Hence all $V_y$'s are real-isometries  and equal.
\end{proof}
\begin{remark}{\rm

(i) In the above theorem, if $\BK=\BC$ and $T$ is assumed to be
complex linear, then each $V_y$ is also complex-linear.

(ii) If $A$ is assumed to be $E$-separating, then the same
argument can be applied to get a similar  description of $T$  for
all points $y\in Y_1$ and the previously defined map $\Phi_1: Y_1
\longrightarrow \Omega(A)$.

(iii) If $T$ maps constants onto constants, then it is easy to see
that the  real-linear isometry $V:E \longrightarrow F $ is
surjective.}
\end{remark}

For the application of the results we give next corollaries.

As we noted before, the following (S3) condition has been
considered in \cite{kawa1} in some results.

(S3) For each $x \in \Ch(A)$, for each neighborhood $U$ of $x$ and
for each $u \in E$ there exists a $f \in A$ such that $\|f\|_\infty =
\|u\|$, $f (x) = u$ and $f =0$ on $X\backslash U$.

Clearly if (S3) holds for $A$, then we have $\Ch(A) \subseteq
\Theta(A)$ and consequently $\Theta(A)$ is a boundary for $A$. Now
in the next corollary we consider this later condition. Hence this corollary may be compared with Theorem   \ref{kawa}.

\begin{cor}
Let $X, Y$ be compact Hausdorff spaces, $E,
F$ be Banach spaces over $\BK$, where $F$ is strictly convex. Let
 $A$ and $B$ be $\BK$-subspaces of $C(X,E)$ and $C(Y,F)$ such that  $\Ch(A)
\subseteq \Theta(A)$. Then for any surjective isometry $T: A
\longrightarrow B$  there exist a subset $Z$ of $Y$, a continuous
surjection  $\phi: Z \longrightarrow \Ch(A)$ and a family
$\{V_y\}_{y\in Z}$ of real-linear operators  from $E$ to $F$ with
$\|V_y\|=1$ such that
\[ Tf(y)=T0(y)+ V_y(f(\phi(y)))\,\,\,\,\,\,\,\, (f \in A, y \in Z).\]
Moreover, in the case that $A,B$ contain constants,  and $T$ maps constants to constants, all $V_y$'s
are equal to a real-linear isometry $V:E \longrightarrow F$ and $Z$ is a boundary for $B$.
\end{cor}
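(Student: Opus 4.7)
The plan is to invoke Theorem \ref{main} and then restrict its conclusion to the preimage of the Choquet boundary $\Ch(A)$.

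First I would apply Theorem \ref{main} to obtain a subset $Y_0 \subseteq Y$, a continuous surjection $\Phi: Y_0 \longrightarrow \Theta(\overline{A})$, and a family of real-linear operators $V_y: E \longrightarrow F$ with $\|V_y\|=1$ satisfying
\[
Tf(y) = T0(y) + V_y(f(\Phi(y))) \qquad (f \in A,\ y \in Y_0).
\]
Any $x \in \Theta(A)$ is automatically a strong boundary point of $\overline{A}$ (the witnessing function in $A$ also lies in $\overline{A}$), so the hypothesis $\Ch(A) \subseteq \Theta(A)$ yields $\Ch(A) \subseteq \Theta(\overline{A})$. Setting $Z := \Phi^{-1}(\Ch(A))$ and $\phi := \Phi|_Z$, surjectivity of $\Phi$ onto $\Theta(\overline{A})$ forces $\phi$ to surject onto $\Ch(A)$; continuity of $\phi$ is inherited from $\Phi$, and the displayed formula restricts to $Z$ to give the first assertion.

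For the moreover part, the hypotheses that $A, B$ contain constants and that $T$ takes constants to constants are exactly those of Theorem \ref{main}(ii), forcing all $V_y$ to coincide with a single real-linear isometry $V: E \longrightarrow F$. It remains to show $Z$ is a boundary for $B$. Fix $g \in B$ and set $f := T^{-1}(g + T0) \in A$, where $g + T0$ lies in $B$ because $T0$ is a constant belonging to $B$ (and $B$ is a $\BK$-subspace). The isometry property gives
\[
\|f\|_\infty = \|T(f) - T(0)\|_\infty = \|(g + T0) - T0\|_\infty = \|g\|_\infty.
\]
Choosing $x_0 \in \Ch(A)$ with $\|f(x_0)\| = \|f\|_\infty$ (possible because $\Ch(A)$ is a boundary for $A$) and $y_0 \in Z$ with $\phi(y_0) = x_0$, the formula reads $g(y_0) = Tf(y_0) - T0(y_0) = V(f(x_0))$, so $\|g(y_0)\| = \|f(x_0)\| = \|g\|_\infty$, as required.

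The only real subtlety I anticipate is the last step, namely recovering $g$ (rather than $g - T0$) from the formula of Theorem \ref{main}; this is handled by the pre-translation $g \mapsto g + T0$, which is legal precisely because $B$ contains constants. Every remaining ingredient is a direct restriction or inheritance from the main theorem.
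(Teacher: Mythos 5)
Your proposal is correct and follows essentially the same route as the paper: restrict $\Phi$ to $Z=\Phi^{-1}(\Ch(A))$ for the first part, invoke Theorem \ref{main}(ii) for the common isometry $V$, and use surjectivity of $\phi$ onto the boundary $\Ch(A)$ together with $\|g\|_\infty=\|f\|_\infty$ and $\|V(f(\phi(y_0)))\|=\|f(\phi(y_0))\|$ to see that $Z$ is a boundary for $B$. Your added remarks (that $\Theta(A)\subseteq\Theta(\overline{A})$ and that $g+T0\in B$) only make explicit what the paper leaves implicit.
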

\begin{proof}
The first part is immediate from Theorem \ref{main}. It suffices
to consider  $Z=\Phi^{-1}(\Ch(A))$ and $\phi=\Phi |_Z$. For the
second part, assume that $A,B$ contain constants and $T$ sends
constants to constants. By the above theorem, there exists a
real-linear isometry $V:E \longrightarrow F$ such that $V_y=V$ for
all $y\in Y_0$. To show that $Z$ is a boundary for $B$, let $g\in
B$ and $f\in A$ such that $Tf-T0=g$. Since $\phi: Z
\longrightarrow \Ch(A)$ is surjective and $\Ch(A)$ is a boundary
for $A$, there exists a point $y_0\in Z$ such that $\sup_{y\in
Z}\|f(\phi(y))\|=\|f(\phi(y_0))\|=\|f\|_\infty$. Thus
\begin{align*}
 \|f\|_\infty &=\|g\|_\infty \ge \|g(y_0)\|=\|V(f(\phi(y_0)))\| \\
              &= \|f(\phi(y_0))\|=\|f\|_\infty.
\end{align*}
Therefore, $\|g\|_\infty=\|g(y_0)\|$, that is $Z$ is a boundary
for $B$.
\end{proof}

We recall that a  subspace $A$ of $C(X,E)$ is called {\em
completely regular} if for each  $x\in X$, $u\in S(E)$ and closed
subset $F$ of $X$ not containing $x$, there exists $f \in A$ with
$f(x)=u$, $\|f\|_\infty=1$ and $f(z)=0$ for each $z \in F$.
Obviously for such subspaces we have $ \Theta(A)=X$.  So we get
the following generalization of Cambern's result \cite{cam}, which
is also a generalization of \cite[Theorem 1]{font1} for not
necessarily linear isometries.
\begin{cor}
Let $X,Y$ be  compact Hausdorff spaces and  $E, F$ be Banach
spaces over $\BK$, where $F$ is strictly convex. Let $A$ be a
completely regular $\BK$-subspace of $C(X,E)$ and  $B$ be a
$\BK$-subspace of $C(Y,F)$. Then for any surjective isometry  $T:
A \longrightarrow B$  there exist a subset $Y_0$ of $Y$, a
continuous surjection $\Phi: Y_0 \longrightarrow X$  and a
collection $\{V_y\}_{y\in Y_0}$ of real-linear operators from $E$
to $F$ with $\|V_y\| = 1$ such that
\[ Tf(y)=V_y(f(\Phi(y))) \,\,\,\,\,\, (f \in A, y \in Y_0).\]
 \end{cor}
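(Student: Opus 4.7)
The proof is a direct specialization of Theorem \ref{main}, so my plan is simply to verify the single hypothesis needed and then quote the theorem.

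First I would check that complete regularity of $A$ forces $\Theta(A)=X$. Fix $x\in X$, a neighborhood $U$ of $x$, a scalar $\varepsilon>0$, and a vector $u\in S(E)$. The set $C=X\setminus U$ is closed and does not contain $x$, so by complete regularity there exists $f\in A$ with $f(x)=u$, $\|f\|_\infty=1$ and $f\equiv 0$ on $C$. In particular $\|f(y)\|=0<\varepsilon$ for every $y\in X\setminus U$, so $x\in\Theta(A)$. Since $A\subseteq\overline{A}$, the very same $f$ witnesses $x\in\Theta(\overline{A})$, hence $\Theta(\overline{A})=X$ as well.

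Next, I would apply Theorem \ref{main} to $T:A\longrightarrow B$ (using the standing Mazur--Ulam reduction $T\mapsto T-T0$ that the authors make at the beginning of the proof of Theorem \ref{main}, which is what justifies writing the formula of the corollary without an additive $T0(y)$ term). This produces a subset $Y_0\subseteq Y$, a continuous surjection $\Phi:Y_0\longrightarrow\Theta(\overline{A})=X$, and a family $\{V_y\}_{y\in Y_0}$ of real-linear operators from $E$ to $F$ with $\|V_y\|=1$ such that
\[
Tf(y)=V_y(f(\Phi(y))),\qquad f\in A,\ y\in Y_0,
\]
which is exactly the stated conclusion.

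There is essentially no obstacle here: the corollary is a rewording of Theorem \ref{main} in which the abstract condition "$x$ is a strong boundary point of $\overline{A}$" is replaced by the classical and easily verifiable hypothesis of complete regularity. The only small point to watch is the asymmetry between $\Theta(A)$ and $\Theta(\overline{A})$ in the statement of Theorem \ref{main}, but as observed in the first paragraph this is automatic, since a function $f\in A$ realizing the strong-boundary condition for $A$ also realizes it for $\overline{A}$.
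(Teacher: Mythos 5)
Your proof is correct and follows exactly the route the paper intends: the paper offers no separate argument for this corollary beyond the remark that complete regularity obviously gives $\Theta(A)=X$ (taking the closed set $X\setminus U$ in the definition), after which Theorem \ref{main} applies verbatim. Your additional observations — that the witnessing functions also certify $\Theta(\overline{A})=X$, and that the missing $T0(y)$ term is accounted for by the Mazur--Ulam reduction — correctly fill in the small details the paper leaves implicit.
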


In the next theorem we give a similar result for surjective
isometries, in not necessarily strictly convex case. We consider
the case that $F$ is a Banach space whose unit sphere $S(F)$ has
at least a point $v$ such that $\{v\}$ is a maximal convex subset
of $S(F)$.  Before stating our result we give an example of such
(non strictly convex) Banach spaces.

\begin{exam}{\rm
Let $n\in \BN$ and $K$ be a compact symmetric convex subset of
$\BR^n$ with nonempty interior. Then we set $\|0\|=0$ and for each
nonzero point $x\in \BR^n$ we define $\|x\|= \frac{1}{\max\{t\in
\BR: tx \in K\}}$. Then $\|\cdot \|$ defines a norm on $\BR^n$
whose closed unit ball is $K$. In particular, consider the
following subset of $\BR^2$:
\begin{align*}
K&=\{ ({\rm sec}(\theta),\theta): \theta\in (0,\frac{\pi}{4} )\cup
(\frac{7\pi}{4}, 2\pi)\} \bigcup \{(\sqrt{2},\theta): \theta\in
(\frac{\pi}{4}, \frac{3 \pi}{4}) \cup (\frac{5\pi}{4}, \frac{7
\pi}{4})\} \\
&\bigcup \{ (-{\rm sec}(\theta), \theta): \theta \in
(\frac{3\pi}{4}, \frac{5 \pi}{4})\}.
\end{align*}
 Then $K$ satisfies the above mentioned
properties and so $(\BR^2, \|\cdot \|)$ is a Banach space with
closed unit ball $K$. It is clear that this Banach space is not
strictly convex, and there are infinitely many points in $K$ which
are maximal convex subsets of $K$.}
\end{exam}

\begin{theorem}
Let $X,Y$ be compact Hausdorff spaces, $E,F$ be Banach spaces such
that $\widetilde{S}(F)$ contains at least one singleton and let
$A$, $B$ be $\BK$-subspaces of $C(X,E)$ and $C(Y,F)$ with
$\Theta(A)=X$ and $\theta(B)=Y$.  Then for any surjective isometry
$T: A\longrightarrow B$, there exist a continuous map
$\Phi:Y\longrightarrow X$, a family $\{V_y\}_{y\in Y}$ of linear
operators from $E$ to $F$ with $\|V_y\|\le 1$, for all $y\in Y$
such that
\[ Tf(y)= T0(y)+V_y(f(\Phi(y)))\,\,\,\,\,\,\,(f\in A, y\in Y).\]
If, in addition,  $\widetilde{S}(E)$ also  contains a singleton,
then $\Phi$ is a homeomorphism and all $V_y$ are isometries.
\end{theorem}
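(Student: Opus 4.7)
The strategy is to adapt the proof of Theorem \ref{main}, using the singleton $\{v_0\}\in\widetilde{S}(F)$ in place of strict convexity of $F$ when constructing $\Phi$ and the operators $V_y$, and then---for the ``moreover'' statement---to run the same construction on $T^{-1}$ using the singleton $\{u_0\}\in\widetilde{S}(E)$ in order to produce an inverse for $\Phi$ and for each $V_y$. As in Theorem \ref{main}, Mazur--Ulam lets me assume $T0=0$ and $T$ real-linear, and I may replace $A$ by its closure. From $\Theta(A)=X$, $\Theta(B)=Y$ and Lemma \ref{1} I get $X\subseteq\eta_1(A)\cap\eta_2(A)\cap X_0$ and similarly for $Y$, so every maximal convex subset of $S(A)$ (resp.\ $S(B)$) has a \emph{uniquely} determined representation $V^A_{x,K}$ (resp.\ $V^B_{y,L}$).

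For each $y\in Y$, since $\{v_0\}\in\widetilde{S}(F)$ and $y\in\eta_1(B)$, the set $V^B_{y,\{v_0\}}$ is maximal convex in $S(B)$, so its pre-image under $T$ equals $V^A_{x,K}$ for unique $x\in X$ and $K\in\widetilde{S}(E)$; set $\Phi(y)=x$. For each $u\in K$ the inclusion $T(V^A_{x,\{u\}})\subseteq V^B_{y,\{v_0\}}$ shows $y\in H_x$, so the argument of Lemma \ref{dis} yields $Tf(y)=0$ whenever $f(x)=0$. Hence $V_y(u):=Tf_0(y)$ for any $f_0\in A$ with $f_0(\Phi(y))=u$ is well-defined and real-linear; choosing $\|f_0\|_\infty=\|u\|$ (possible since $\Phi(y)\in\Theta(A)$) gives $\|V_y\|\le 1$. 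The identity $Tf(y)=V_y(f(\Phi(y)))$ holds by construction, and continuity of $\Phi$ is proved exactly as in Theorem \ref{main}.

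Assume in addition $\{u_0\}\in\widetilde{S}(E)$. Running the whole construction on $T^{-1}\colon B\to A$ with $\{u_0\}$ yields a continuous map $\Psi\colon X\to Y$ and real-linear contractions $W_x\colon F\to E$ satisfying $T^{-1}g(x)=W_x(g(\Psi(x)))$ for all $g\in B$, $x\in X$. To prove $\Phi\circ\Psi=\mathrm{id}_X$, fix $x\in X$ and set $y=\Psi(x)$, so that $T(V^A_{x,\{u_0\}})=V^B_{y,L_x}$ for some $L_x\in\widetilde{S}(F)$; every $f\in V^A_{x,\{u_0\}}$ then has $\|Tf(y)\|=1$, and plugging in the representation formula gives $\|V_y(f(\Phi(y)))\|=1$. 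Since $\|V_y\|\le 1$ and $\|f\|_\infty=1$ this forces $\|f(\Phi(y))\|=1$, i.e.\ $\Phi(y)\in\bigcap_{f\in V^A_{x,\{u_0\}}}M(f)$; the strong boundary property at $x$ shows that for any $x''\neq x$ some $f\in V^A_{x,\{u_0\}}$ has $\|f(x'')\|<1$, so the intersection reduces to $\{x\}$ and $\Phi(\Psi(x))=x$. The symmetric argument gives $\Psi\circ\Phi=\mathrm{id}_Y$, so $\Phi$ is a homeomorphism with inverse $\Psi$. Substituting the two representation formulas into $T^{-1}\circ T=\mathrm{id}_A$ and $T\circ T^{-1}=\mathrm{id}_B$ yields $W_{\Phi(y)}\circ V_y=\mathrm{id}_E$ and $V_y\circ W_{\Phi(y)}=\mathrm{id}_F$, so each $V_y$ is a contraction with a contractive inverse, hence an isometry. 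The main obstacle is precisely the identification $\Phi\circ\Psi=\mathrm{id}_X$: without strict convexity of either space, $L_x$ need not be a singleton and one cannot conclude, as in Theorem \ref{main}, by matching a single target value in $F$; the replacement via $\bigcap M(f)$ combined with the strong boundary property at $x$ is the crux, and is where the hypothesis $\Theta(A)=X$ is genuinely used.
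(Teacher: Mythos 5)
Your proposal is correct and follows the paper's own proof essentially step for step: the first part is the same construction via the maximal convex set $V^B_{y,\{v_0\}}$ and its preimage $V^A_{x,K}$, and for the ``moreover'' part the paper likewise runs the construction on $T^{-1}$ and then derives $\Phi(\Psi(x))=x$ by producing a function in $V^A_{x,\{u_0\}}$ that is small at the putative wrong point and feeding it through the contractive representation formulas --- your $\bigcap_{f} M(f)$ phrasing is just a cosmetic repackaging of that same argument. The final identification $V_y=W_{\Phi(y)}^{-1}$ and the conclusion that each $V_y$ is an isometry also match the paper exactly.
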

\begin{proof}
As before we may assume that $T$ is real-linear. By hypothesis,
there exists $v\in S(F)$, such that $\{v\}$ is a maximal convex
subset of $S(F)$. For each $y\in Y$, since $Y=\Theta(B)$, it
follows from Lemma \ref{1} that $V_{y, \{v\}}$ is a maximal convex
subset of $S(B)$. Being $T^{-1}$ an isometry, there exists $x\in
X$ and $K\in \widetilde{S}(E)$ such that
$T^{-1}(V_{y,\{v\}})=V_{x,  K}$, that is $T(V_{x,K})=V_{y,\{v\}}$.
We note that the point $x\in X$ satisfying the above equality for
some $K\in \widetilde{S}(E)$ is unique. Indeed, if
$V_{x,K}=V_{z,L}$ where $z\in X$ is distinct from $x$ and $L\in
\widetilde{S}(E)$, then since $x$ and $z$ are strong boundary
points for $A$ we can find easily a function $f\in V_{x,K}$ with
$\|f(z)\|< \frac{1}{2}$, a contradiction. The same argument as in
Lemma \ref{dis} shows that for all $f\in A$, $f(x)=0$ implies
$Tf(y)=0$ and consequently for each $f,h\in A$ with $f(x)=h(x)$ we
have $Tf(y)=Th(y)$. Thus we can define a real linear  operator
$V_y:E\longrightarrow F$ by $V_y(e)=Tf(y)$ where $f\in A$ such
that $f(x)=e$.  Since $X=\Theta(A)$, the above function $f\in A$
can be chosen such that $\|f\|_\infty=\|e\|$ and $f(x)=e$. This
shows that $\|V_y\|\le 1$. Clearly $Tf(y)=V_y(f(x))$ holds for all
$f\in A$. By the above argument we can define a map $\Phi:Y
\longrightarrow X$ and a family of real linear operators
$\{V_y\}_{y\in Y}$ such that
\[Tf(y)=V_y(f(\Phi(y)))\;\;\;\;\;\;\; (f\in A, y\in Y).\]

As in Theorem \ref{main} we see that $\Phi:Y\longrightarrow X$ is
continuous.

For the second part, assume that $\widetilde{S}(E)$ also contains
a singleton. Then using the above discussion for $T^{-1}$ we can
define a continuous map $\Psi: X\longrightarrow Y$ and a family
$\{W_x\}_{x\in X}$ of real-linear operators from $F$ to $E$ such
that $\|W_x\|\le 1$ and
\[ T^{-1}(g)(x)=W_x(g(\Psi(x)))\;\;\;\;\;\;\;(g\in B, x\in X). \]
Thus, for each $f\in A$ and $x\in X$ we have
\[f(x)=W_x(Tf(\Psi(x)))=W_x(V_{\Psi(x)}(f(\Phi(\Psi(x))))\]
If $x\in X$ and $\Phi(\Psi(x)))\neq x$, then there exists $f\in A$
with $\|f(x)\|=1$ and $\|f(\Phi(\Psi(x)))\|\le \frac{1}{2}$. Hence
\[1=\|f(x)\|=\|W_x(V_x(f(\Phi(\Psi(x))))\|\le
\|f(\Phi(\Psi(x)))\|\le \frac{1}{2}\] which is a impossible.
Therefore, $\Phi(\Psi(x)))=x$ for all $x\in X$. Similar argument
shows that $\Psi(\Phi(y)))=y$ for all $y\in Y$, that is
$\Psi=\Phi^{-1}$, in particular, $\Phi$ is a homeomorphism. By the
above argument we have
\[ f(x)=W_x(V_{\Psi(x)}(f(x))\,\,\,\,\,\,\,(f\in A, x\in X). \]
Since for each $x\in X$ and $e\in E$ we can choose $f\in A$ with
$f(x)=e$, it follows from the above equality that
$W_x(V_{\Psi(x)}(e))=e$ for all $e\in E$. Similarly,
$V_y(W_{\Phi(y)}(e'))=e'$ for all $e'\in F$.  Hence
$V_y=W_{\Phi(y)}^{-1}$ and $\|V_y\|=\|W_{\Phi(y)}\|=1$, that is
each $V_y$ is an isometry, as desired.
\end{proof}

\vspace*{.25cm} {\bf Acknowledgement.} The first and the second
authors were partially supported by Iran National Science
Foundation: INSF (Grant No. 95002593)

\vspace*{.25 cm}

\address{{Arya Jamshidi,  Department of Pure Mathematics, Faculty of  Mathematical
Sciences,  Tarbiat Modares University, Tehran, 14115-134, Iran}}

\email{{\em E-mail address:} arya.Jamshidi@modares.ac.ir }

\vspace*{.25 cm}

\address{{Fereshteh Sady,  Department of Pure Mathematics, Faculty of  Mathematical
Sciences,  Tarbiat Modares University, Tehran, 14115-134, Iran}}

\email{{\em E-mail adress:}  sady@modares.ac.ir}
\end{document}